\newcommand{\Rb}{\mathbbm{R}}      
\newcommand{\Eb}{\mathbbm{E}}
\newcommand{\Fc}{\mathcal{F}}
\newcommand{\Sc}{\mathcal{S}}
\newcommand{\argmin}{\mathop{\rm argmin}}
\newcommand{\dist}{\mathop{\rm dist}}
\newcommand*{\dt}[1]{\overset{\hbox{\,\tiny${}_\bullet$}}{#1}}
\newcommand{\eqdef}{\mathrel{\overset{\raisebox{-0.02em}{$\scriptstyle\vartriangle$}}{\,=\,}}}
\newtheorem{proposition}{Proposition}[section]
\newtheorem{theorem}[proposition]{Theorem}
\newtheorem{corollary}[proposition]{Corollary}
\newtheorem{lemma}[proposition]{Lemma}
\newtheorem{definition}[proposition]{Definition}
\begin{document}

\title{Convergence of a Stochastic Subgradient Method with Averaging for Nonsmooth Nonconvex Constrained
Optimization\footnote{This publication was supported by the NSF Award DMS-1312016. }
}



\author{Andrzej Ruszczy\'nski\footnote{Rutgers University, Department of Management Science and Information Systems, Piscataway, NJ 08854, USA;
              email: {rusz{@}rutgers.edu}}
}


\date{December 16, 2019}

\maketitle

\begin{abstract}
We prove convergence of a single time-scale stochastic subgradient method with subgradient averaging
for constrained problems with a nonsmooth and nonconvex objective function having the property of
generalized differentiability. As a tool of our analysis, we also
prove a chain rule on a path for such functions.\\
\emph{Keywords:} {Stochastic Subgradient Method, Nonsmooth Optimization, Generalized Differentiable Functions, Chain Rule}
\end{abstract}

\section{Introduction}

We consider the problem
\begin{equation}
\label{main_prob}
\min_{x\in X}\; f(x)
\end{equation}
where $X\subset \Rb^n$ is convex and closed, and $f:\Rb^n\to\Rb$ is a Lipschitz continuous function, which may be neither convex nor smooth. The subgradients of $f(\cdot)$ are not available; instead, we postulate access to their random estimates.

Research on stochastic subgradient methods for nonsmooth and nonconvex functions started in late 1970's. Early contributions
are due to Nurminski, who considered weakly convex functions and established a general methodology for studying convergence of non-monotonic methods \cite{Nurminski1979numerical},
Gupal and his co-authors, who considered convolution smoothing (mollification) of Lipschitz functions
and resulting finite-difference methods \cite{gupal1979stochastic}, and  Norkin,
who considered unconstrained problems with ``generalized differentiable'' functions
\cite[Ch. 3 and 7]{mikhalevich1987nonconvex}.
 Recently, by an approach via differential inclusions, Duchi and Ruan \cite{duchi2018stochastic} studied proximal methods for
 sum-composite problems with weakly convex functions,
 Davis \emph{et al.} \cite{davis2018stochastic} proved convergence of the subgradient method for locally Lipschitz Whitney stratifiable functions
 with constraints, and Majewski \emph{et al.} \cite{majewski2018analysis} studied several methods for subdifferentially regular Lipschitz functions.

Our objective is to show that a single time-scale stochastic subgradient method with direction averaging \cite{ruszczynski1986method,ruszczynski87},
is convergent for a broad class of functions
enjoying the property of ``generalized differentiability,'' which contains all classes of functions mentioned above, as well as their compositions.

Our analysis follows the approach of relating a stochastic approximation algorithm to a
continuous-time dynamical system, pioneered in \cite{ljung1977analysis,kushner2012stochastic} and developed in
many works (see, \emph{e.g.}, \cite{kushner2003stochastic} and the references therein). Extension to multifunctions
was proposed in  \cite{benaim2005stochastic} and further developed, among others,
in \cite{borkar2009stochastic,duchi2018stochastic,davis2018stochastic,majewski2018analysis}.

For the purpose of our analysis, we also prove a chain rule on a path under generalized differentiability, which may be of independent interest.

{ We illustrate the use of the method for training a \emph{ReLu} neural network.}

\section{The chain formula on a path}

Norkin \cite{norkin1980generalized} introduced the following class of functions.
\begin{definition}
\label{d:Norkin}
A function $f:\Rb^n\to\Rb$ is  \emph{differentiable in a generalized sense at a point} $x\in \Rb^n$,
if an open set $U\subset \Rb^n$ containing $x$,
and a nonempty, convex, compact valued, and upper semicontinuous multifunction
$G_f: U \rightrightarrows \Rb^n$ exist, such that for all $y\in U$ and all $g \in G_f(y)$ the following equation is true:
\[
f(y) = f(x) + \langle g(y), y-x \rangle + o(x,y,g),
\]
with
\[
\lim_{y\to x} \sup_{g\in G(y)} \frac{o(x,y,g)}{\|y-x\|}=0.
\]
The set $G_f(y)$ is  the \emph{generalized subdifferential} of $f$ at $y$.
If a function is differentiable in a generalized sense at every  $x \in \Rb^n$ with the same generalized subdifferential mapping $G_f:\Rb^n\rightrightarrows \Rb^n$, we call it \emph{differentiable in a generalized sense}.

{ A function $f:\Rb^n\to\Rb^m$ is  differentiable in a generalized sense, if each of its component functions, $f_i:\Rb^n\to\Rb$, $i=1,\dots,m$, has this property.}
\end{definition}

The class of such functions is contained in the set of locally Lipschitz functions \cite[Thm. 1.1]{mikhalevich1987nonconvex},
and contains all subdifferentially regular functions \cite{clarke1975generalized},
Whitney stratifiable Lipschitz functions \cite{drusvyatskiy2015curves},
semismooth functions \cite{mifflin1977semismooth}, and their compositions.
In fact,
if a function is differentiable in  generalized sense and  has directional derivatives at $x$ in every direction, then it is semismooth at $x$. The Clarke subdifferential $\partial\! f(x)$ is an inclusion-minimal generalized subdifferential, but the generalized sub\-differential
mapping $G_f(\cdot)$ is not uniquely defined in Definition \ref{d:Norkin}, which plays a role in our considerations. For stochastic optimization, essential is the closure of the class of such functions  with respect to
expectation, which allows for easy generation of stochastic subgradients.
In the Appendix we recall basic properties of functions  differentiable in a generalized sense.
For thorough exposition, see \cite[Ch. 1 and 6]{mikhalevich1987nonconvex}.

Our interest is in a formula for calculating the increment of a function $f:\Rb^n\to \Rb$ along a path $p:[0,\infty)\to\Rb^n$,
which is at the core of
the analysis of nonsmooth and stochastic optimization algorithms (see \cite{drusvyatskiy2015curves,davis2019stochastic}
and the references therein).
 For an absolutely continuous function $p:[0,\infty)\to\Rb^n$ we denote
by $\dt{p}(\cdot)$ its weak derivative, that is, a measurable function such that
\[
p(t) = p(0) + \int_0^t \dt{p}(s)\;ds,\quad \forall \;  t \ge 0.
\]
\begin{theorem}
\label{t:chain-path}
If\, $f:\Rb^n\to\Rb$ and $p:[0,\infty)\to \Rb^n$  are differentiable in a generalized sense, then for every $T>0$,
any generalized subdifferential $G_f(\cdot)$, and every selection $g(p(t))\in G_f(p(t))$, we have
\begin{equation}
\label{chain-path}
f(p(T))- f(p(0)) = \int_0^T \big\langle g(p(t)), \dt{p}(t) \big\rangle \;dt.
\end{equation}
\end{theorem}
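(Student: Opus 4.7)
The plan is to reduce the chain formula to the fundamental theorem of calculus for the scalar composition $h(t) := f(p(t))$, and then identify $\int_0^T h'(t)\,dt$ with the claimed integral via an $L^1$-translation argument applied to the selection $g(p(\cdot))$.

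Since functions differentiable in a generalized sense are locally Lipschitz, both $f$ and $p$ have this property, so $h$ is Lipschitz on $[0,T]$, hence absolutely continuous, and the fundamental theorem of calculus gives $h(T) - h(0) = \int_0^T h'(t)\,dt$, with $h'(t)$ and $\dt{p}(t)$ both existing at almost every $t$. Fix such a $t$. Applying Definition~\ref{d:Norkin} at base $p(t)$ with $y = p(t+\delta)$ yields, for any $g_\delta \in G_f(p(t+\delta))$,
\[
f(p(t+\delta)) - f(p(t)) = \langle g_\delta,\, p(t+\delta) - p(t)\rangle + o\bigl(\|p(t+\delta) - p(t)\|\bigr),
\]
with the error uniform in $g_\delta$ thanks to the supremum inside the $o$-limit. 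Combining with the first-order expansion $p(t+\delta) - p(t) = \delta\,\dt{p}(t) + o(\delta)$ and the local boundedness of $G_f$ (from upper semicontinuity and compact values), dividing by $\delta$, and choosing $g_\delta = g(p(t+\delta))$ gives
\[
\langle g(p(t+\delta)),\, \dt{p}(t)\rangle\; \longrightarrow\; h'(t)\quad \text{as}\ \delta \to 0, \text{ at a.e.}\ t.
\]

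I then pass to the limit $\delta \to 0$ under the integral in two different ways. By dominated convergence, with integrand bounded by $M\,\|\dt{p}\|_\infty$ for $M$ a uniform bound on $G_f$ over a neighborhood of $p([0,T])$,
\[
\int_0^T \langle g(p(t+\delta)),\, \dt{p}(t)\rangle\, dt\; \longrightarrow\; \int_0^T h'(t)\, dt\; =\; h(T) - h(0).
\]
On the other hand,
\[
\Bigl|\int_0^T \langle g(p(t+\delta)) - g(p(t)),\, \dt{p}(t)\rangle\, dt\Bigr|\; \le\; \|\dt{p}\|_\infty\,\bigl\|g(p(\cdot+\delta)) - g(p(\cdot))\bigr\|_{L^1([0,T])}\; \longrightarrow\; 0
\]
by $L^1$-continuity of translation for the bounded measurable function $t \mapsto g(p(t))$. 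Combining these two limits yields $\int_0^T \langle g(p(t)), \dt{p}(t)\rangle\, dt = h(T) - h(0) = f(p(T)) - f(p(0))$.

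The main obstacle is the pointwise identity $\langle g(p(t+\delta)), \dt{p}(t)\rangle \to h'(t)$, which hinges on the uniformity in $g_\delta$ of the error in Definition~\ref{d:Norkin} combined with the local boundedness of $G_f$; without this uniformity, an arbitrary choice of $g_\delta \in G_f(p(t+\delta))$ could produce a non-converging sequence of inner products. The subsequent $L^1$-translation step is the technical device that accommodates a merely measurable (possibly discontinuous) selection $g(p(\cdot))$, avoiding the need to pass the limit through the selection pointwise, where no such passage is available a priori.
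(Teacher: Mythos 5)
Your argument is sound for selections that are measurable along the path, and it takes a genuinely different route from the paper. The paper works with the auxiliary function $\varphi(\varepsilon)=\int_0^T f(p(t+\varepsilon))\,dt$, computes $\varphi'_+(0)$ in two ways, and — after the same expansion of the difference quotient with the subgradient taken at the moving point $p(t+\varepsilon)$ — identifies the a.e.\ pointwise limit as $\langle g(p(t)),\dt{p}(t)\rangle$ by invoking the composition result (Theorem \ref{t:composition}) together with the facts that a generalized subdifferential of a one-variable function is single-valued off a countable set (Theorem \ref{t:one-var}) and that $G_p(t)=\{\dt{p}(t)\}$ a.e.\ (Theorem \ref{t:differetiable-ae}). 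You dispense with all of that machinery: you only show $\langle g(p(t+\delta)),\dt{p}(t)\rangle\to h'(t)$ for $h=f\circ p$ (which is immediate from the expansion, since the left side differs from the difference quotient of $h$ by $o(1)$), then use the fundamental theorem of calculus for the Lipschitz function $h$ and trade the time shift away via $L^1$-continuity of translations. This is more elementary and self-contained; what it gives up is the pointwise conclusion that $\tfrac{d}{dt}f(p(t))=\langle g(p(t)),\dt{p}(t)\rangle$ for a.e.\ $t$ and for every selection, which the paper's identification provides and which carries one extra piece of generality, discussed next.

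The caveat is measurability. Both your dominated-convergence step and the translation step require $t\mapsto g(p(t))$ to be Lebesgue measurable; you assume this ("the bounded measurable function $t\mapsto g(p(\cdot))$"), but Theorem \ref{t:chain-path} quantifies over \emph{every} selection $g(p(t))\in G_f(p(t))$, with no measurability hypothesis, and $L^1$-translation continuity has no meaning for a non-measurable $g\circ p$. The paper's proof avoids the issue because the selection enters only as the a.e.\ pointwise limit of the continuous (hence measurable) difference quotients $\tfrac1\varepsilon[f(p(t+\varepsilon))-f(p(t))]$; this simultaneously shows that $\langle g(p(\cdot)),\dt{p}(\cdot)\rangle$ is a.e.\ equal to a measurable function, so \eqref{chain-path} makes sense and holds for arbitrary selections. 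For the use made of the theorem in the proof of Theorem \ref{t:convergence}, where the selection $\hat g(X(t))=Z(t)+\dt{Z}(t)/a$ is measurable, your version suffices; to prove the statement as written you would need to supplement your argument (for instance by the paper's pointwise identification) to cover selections that are not measurable along the path.
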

\begin{proof}
Consider the function
\[
\varphi(\varepsilon) = \int_0^T f(p(t+\varepsilon))\;dt,\quad \varepsilon\ge 0.
\]
Its right derivative at 0 can be calculated in two ways:
\begin{equation}
\label{varphi-prime}
\begin{aligned}
\varphi'_+(0) &= \lim_{\varepsilon\downarrow 0} \frac{1}{\varepsilon} \big[ \varphi(\varepsilon) - \varphi(0)\big] =
\lim_{\varepsilon\downarrow 0} \frac{1}{\varepsilon} \Big[ \int_0^T f(p(t+\varepsilon))\;dt - \int_0^T f(p(t))\;dt \Big]\\
&= \lim_{\varepsilon\downarrow 0} \frac{1}{\varepsilon} \Big[ \int_\varepsilon^{T+\varepsilon} f(p(\tau))\;d\tau - \int_0^T f(p(t))\;dt \Big]\\
&= \lim_{\varepsilon\downarrow 0} \frac{1}{\varepsilon} \Big[ \int_T^{T+\varepsilon} f(p(t))\;dt - \int_0^\varepsilon f(p(t))\;dt \Big]
= f(p(T))- f(p(0)).
\end{aligned}
\end{equation}
On the other hand,
\begin{equation}
\label{pre-Lebesgue}
\varphi'_+(0) = \lim_{\varepsilon\downarrow 0}  \int_0^T \frac{1}{\varepsilon} \big[f(p(t+\varepsilon)) -f(p(t))\big]\;dt.
\end{equation}
By the generalized differentiability of $f(\cdot)$, the differential quotient under the integral can be expanded as follows:
\begin{align}
\lefteqn{\frac{1}{\varepsilon} \big[f(p(t+\varepsilon))\;dt -f(p(t))\big]} \nonumber \\
 &=\frac{1}{\varepsilon} \big\langle g(p(t+\varepsilon)), p(t+\varepsilon)-p(t) \big\rangle
 + \frac{1}{\varepsilon} o\big(p(t), p(t+\varepsilon), g(p(t+\varepsilon)) \big), \label{expansion}\\
&\text{with}\quad \lim_{\varepsilon\downarrow 0} \frac{1}{\varepsilon} o\big(p(t), p(t+\varepsilon), g(p(t+\varepsilon)) \big) =0.
\nonumber
\end{align}
{ Since $p(\cdot)$ is differetiable in a generalized sense, it is locally Lipschitz continuous
\cite[Thm. 1.1]{mikhalevich1987nonconvex}, hence absolutely continuous.} Thus,
for almost all $t$, we have
$\frac{1}{\varepsilon}\big[ p(t+\varepsilon)-p(t)\big]  = \dt{p}(t) + r(t,\varepsilon)$,
with $\lim_{\varepsilon\downarrow 0}\; r(t,\varepsilon)=0$.
Combining  it with \eqref{expansion}, and using the local boundedness of generalized gradients, we obtain
\begin{equation}
\label{quotient}
\frac{1}{\varepsilon}\big[f(p(t+\varepsilon)) -f(p(t))\big]
=
\big\langle g(p(t+\varepsilon)), \dt{p}(t) \big\rangle + O(t,\varepsilon),
\end{equation}
with $ \lim_{\varepsilon\downarrow 0}\;O(t,\varepsilon)=0$.
By \cite[Thm. 1.6]{mikhalevich1987nonconvex} (Theorem \ref{t:composition}), the function $\psi(t) = f(p(t))$ is differentiable in a generalized sense as well and
\[
G_\psi(t) = \big\{ \langle g, h \rangle: g\in G_f(p(t)), \; h\in G_p(t) \big\}
\]
is its generalized subdifferential. By virtue of \cite[Cor. 1.5]{mikhalevich1987nonconvex} (Theorem \ref{t:one-var}), any generalized subdifferential mapping $G_\psi(\cdot)$ is single-valued
except for a countable number of points in $[0,1]$. Since it is upper semicontinuous, it is continuous almost everywhere.
By \cite[Thm. 1.12]{mikhalevich1987nonconvex} (Theorem \ref{t:differetiable-ae}), almost everywhere $G_p(t)=\{\dt{p}(t)\}$. Then for any $h(t+\varepsilon)\in G_p(t+\varepsilon)$
and for almost all~$t$,
\[
\lim_{\varepsilon\downarrow 0} \big\langle g(p(t+\varepsilon)), h(t+\varepsilon) \big\rangle = \big\langle g(p(t)), \dt{p}(t) \big\rangle.
\]
Therefore, for almost all $t$,
\[
\lim_{\varepsilon\downarrow 0} \big\langle g(p(t+\varepsilon)), \dt{p}(t) \big\rangle 
=
\big\langle g(p(t)), \dt{p}(t) \big\rangle +
\lim_{\varepsilon\downarrow 0} \big\langle g(p(t+\varepsilon)), \dt{p}(t) - h(t+\varepsilon) \big\rangle
= \big\langle g(p(t)), \dt{p}(t) \big\rangle,
\]
where the last equation follows from the local boundedness of $G_f(\cdot)$  and the continuity of $G_p(\cdot)$ at the points of differentability. Thus,
for almost all $t$, we can pass to the limit in \eqref{quotient}:
\[
\lim_{\varepsilon\downarrow 0} \frac{1}{\varepsilon} \big[f(p(t+\varepsilon))\;dt -f(p(t))\big] = \big\langle g(p(t)), \dt{p}(t) \big\rangle.
\]
We can now use the Lebesgue theorem and pass to the limit under the integral in \eqref{pre-Lebesgue}:
\[
\varphi'_+(0)  =\int_0^T \lim_{\varepsilon\downarrow 0}   \frac{1}{\varepsilon} \big[f(p(t+\varepsilon))\;dt -f(p(t))\big]\;dt
= \int_0^T \big\langle g(p(t)), \dt{p}(t) \big\rangle \;dt.
\]
Comparison with \eqref{varphi-prime} yields \eqref{chain-path}. \qed
\end{proof}

\section{The single time-scale method with subgradient averaging}

We briefly recall from \cite{ruszczynski1986method,ruszczynski87} a stochastic approximation algorithm for solving problem \eqref{main_prob} where only random estimates  of subgradients of $f$ are available.

The method generates two random sequences: approximate solutions $\{x^k\}$ and
path-averaged stochastic subgradients $\{z^k\}$, defined
on a certain probability space $(\Omega,\Fc,P)$. We let $\Fc_k$ to be
the $\sigma$-algebra generated by
$\{x^0,\dots,x^k,z^0,\dots,z^k\}$.
We assume that
for each $k$, we can observe an $\Fc_k$-measurable random vector \mbox{$g^{k} \in \Rb^n$}, such that,
for some $\Fc_{k}$-measurable vector $r^{k}$, we have
$g^{k} - r^{k} \in G_f(x^k)$.
Further assumptions on the errors $r^k$ will be specified in section \ref{s:analysis}.

The method proceeds for $k=0,1,2\dots$ as follows ($a>0$ and $\beta>0$ are fixed parameters).
We compute
\begin{equation}
\label{QP}
y^k = \argmin_{y \in X}\  \left\{\langle z^k, y-x^k \rangle + \frac{\beta}{2} \|y-x^k\|^2\right\},
\end{equation}
and, with an $\Fc_k$-measurable stepsize $\tau_k \in \big(0,\min(1,1/a)\big]$, we set
\begin{equation}
\label{def_xk}
x^{k+1} = x^k + \tau_k (y^k-x^k).
\end{equation}
Then we observe $g^{k+1}$ at $x^{k+1}$, and update the averaged stochastic subgradient as
\begin{equation}
z^{k+1} = (1-a\tau_k)z^k + a\tau_k g^{k+1}.
\label{def_zk}
\end{equation}
Convergence of the method was proved in \cite{ruszczynski87} for weakly convex functions $f(\cdot)$.  Unfortunately, this class does
not contain functions with
downward cusps, which are common in modern machine learning models (see section \ref{s:example}).

\section{Convergence analysis}
\label{s:analysis}

We call a point $x^*\in \Rb^n$ \emph{Clarke stationary} of problem \eqref{main_prob}, if
\begin{equation}
\label{stationary}
0 \in \partial\! f(x^*) + N_X(x^*),
\end{equation}
where $N_X(x^*)$ denotes the normal cone to $X$ at $x^*$. The set of Clarke stationary points of problem \eqref{main_prob}
is denoted by $X^*$.

We start from a useful property of the gap function \mbox{$\eta:X\times \Rb^n\to(-\infty,0]$},
\begin{equation}
\label{gap}
\eta(x,z) = \min_{y\in X} \left\{\langle z, y-x \rangle + \frac{\beta}{2} \|y-x\|^2\right\}.
\end{equation}
We denote the minimizer in \eqref{gap} by $\bar{y}(x,z)$. Since it is a projection of $x-z/\beta$ on $X$, we observe that
\begin{equation}
\label{opt-eta}
 \langle z ,\bar{y}(x,z)-x \rangle  + \beta \| \bar{y}(x,z)-x \|^2 \le 0.
\end{equation}
Moreover, a point $x^*\in X^*$ if and only if $g^*\in \partial\! f(x^*)$ exists such that $\eta(x^*,g^*)=0$.

We analyze convergence of the algorithm \eqref{QP}--\eqref{def_zk} under the following conditions,
the first three of which are assumed to hold with probability 1:
\begin{description}
\item[(A1)] All iterates $x^k$ belong to a compact set;
\item[(A2)] $\tau_k \in \big(0,\min(1,1/a)\big]$ for all $k$, $\lim_{k\to\infty}\tau_k= 0$, $\sum_{k=0}^\infty \tau_k = \infty$;
\item[(A3)] For all $k$, $r^k = e^k+\delta^k$, with $\sum_{k=0}^\infty \tau_k e^k$ convergent, and $\lim_{k\to\infty}\delta_k = 0$;
\item[(A4)] The set $\{ f(x): x\in X^*\}$ does not contain an interval of nonzero length.
\end{description}
Condition (A3) can be satisfied for a martingale $\sum_{k=0}^\infty \tau_k e^k$, but can also hold for broad classes of dependent
``noise'' sequences $ \{e^k\}$ \cite{kushner2003stochastic}. { Condition (A4) is true for Whitney stratifiable functions
\cite[Cor. 5]{bolte2007clarke}, but we need to assume it here.}

We  have the following elementary property of the sequence $\{z^k\}$.
\begin{lemma}
\label{l:zk-bounded}
Suppose the sequence $\{x^k\}$ is included in a set $A\subset \Rb^n$ and conditions {\rm (A2)} and {\rm (A3)} are satisfied. Then
\[
\lim_{k\to\infty} \text{\rm dist}(z^k,B)=0,\quad {where}\quad B = \text{\rm conv} \Big(\bigcup_{x\in A} \partial\! f(x)\Big).
\]
\end{lemma}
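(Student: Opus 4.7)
The plan is to split recursion \eqref{def_zk} into a ``clean'' part driven by the in-subdifferential samples $h^{k+1}\in G_f(x^{k+1})$ and a ``tail'' part driven by the noise $e^{k+1}$, and to analyze each separately. Writing $g^{k+1}=h^{k+1}+\delta^{k+1}+e^{k+1}$ with $h^{k+1}\eqdef g^{k+1}-r^{k+1}\in G_f(x^{k+1})$, I would introduce
\[
u^{k+1}=(1-a\tau_k)u^k+a\tau_k(h^{k+1}+\delta^{k+1}),\qquad u^0=z^0,
\]
\[
v^{k+1}=(1-a\tau_k)v^k+a\tau_k e^{k+1},\qquad v^0=0,
\]
so that $z^k=u^k+v^k$. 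The stepsize cap $\tau_k\le 1/a$ in (A2) forces $a\tau_k\in[0,1]$, making both updates convex blendings --- the single property that drives the rest of the argument.

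\textbf{Clean part: $\dist(u^k,B)\to 0$.} Since $x^{k+1}\in A$, local boundedness of the generalized subdifferential puts $h^{k+1}$ in $B$ (interpreting $B$ with the same $G_f$ the algorithm samples from; the Clarke subdifferential is an inclusion-minimal admissible choice). For any projection $\bar u^k$ of $u^k$ onto $B$, convexity of $B$ gives $(1-a\tau_k)\bar u^k+a\tau_k h^{k+1}\in B$, hence
\[
\dist(u^{k+1},B)\le(1-a\tau_k)\,\dist(u^k,B)+a\tau_k\|\delta^{k+1}\|.
\]
Boundedness of $\{u^k\}$ is automatic, since each $u^{k+1}$ is a convex combination of $u^k$ and the bounded term $h^{k+1}+\delta^{k+1}$. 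Combined with $\delta^k\to 0$ and $\sum_k a\tau_k=\infty$ (from (A2)), a standard deterministic lemma for recursions of the form $d_{k+1}\le(1-\lambda_k)d_k+\lambda_k\varepsilon_k$ with $\varepsilon_k\to 0$ yields $\dist(u^k,B)\to 0$.

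\textbf{Tail part: $v^k\to 0$.} Unrolling the $v^k$ recursion gives
\[
v^{k+1}=\sum_{j=0}^{k}a\tau_j\,e^{j+1}\prod_{i=j+1}^{k}(1-a\tau_i).
\]
I would apply Abel summation against the convergent series $\sum_j a\tau_j e^{j+1}$, rewriting $v^{k+1}$ as a Toeplitz-type weighted average of the partial sums $W_j=\sum_{i=0}^{j}a\tau_i e^{i+1}$, which converge by (A3) to some limit $W$. The non-negative weights $a\tau_j\prod_{i=j+1}^k(1-a\tau_i)$ sum (telescopically) to $1-\prod_{i=1}^k(1-a\tau_i)\to 1$ by $\sum_k\tau_k=\infty$, and for each fixed $j$ they vanish as $k\to\infty$. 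These are precisely the Toeplitz conditions, so the weighted average converges to $W$, forcing $v^k\to 0$. The triangle inequality $\dist(z^k,B)\le\dist(u^k,B)+\|v^k\|$ then finishes the proof.

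\textbf{Main obstacle.} The tail step is the genuinely subtle one: the error sequence $\{e^k\}$ is \emph{not} assumed absolutely summable --- only $\sum_k\tau_k e^k$ is convergent --- so $a\tau_k e^{k+1}$ cannot simply be absorbed as a summable perturbation of the clean recursion. The Abel-plus-Toeplitz argument is what converts mere conditional convergence of the error series into convergence of the smoothed sequence $v^k$ to zero, and this is where the details need the most care.
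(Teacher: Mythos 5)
Your proposal is correct, but it handles the key difficulty by a genuinely different device than the paper. The paper does not split $z^k$: it shifts the whole iterate by the tail of the convergent noise series, setting $\tilde z^k = z^k + a\sum_{j\ge k}\tau_j e^j$, which turns \eqref{def_zk} into $\tilde z^{k+1}=(1-a\tau_k)\tilde z^k + a\tau_k g^k + \tau_k\Delta_k$ with $g^k\in B$ and $\Delta_k = a\delta^k + a\sum_{j\ge k}\tau_j e^j\to 0$ (the tail vanishes precisely because $\sum_k \tau_k e^k$ converges); one application of the convexity of $\dist(\cdot,B)$ and the same elementary recursion lemma you invoke then finishes the proof. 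Your decomposition $z^k=u^k+v^k$ attacks the same obstacle --- the noise is only conditionally summable against the stepsizes --- via summation by parts and the Silverman--Toeplitz theorem applied to the unrolled recursion for $v^k$; this is the classical Toeplitz-lemma route in stochastic approximation and it is sound. The tail-shift buys brevity (convergence of the series is used only through vanishing tails, no Abel rearrangement); your route keeps the two error sources $e^k$ and $\delta^k$ in visibly separate, elementary recursions.

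Two details to tighten: after Abel summation the weights multiplying the partial sums $W_j$ are $a\tau_{j+1}\prod_{i=j+2}^{k}(1-a\tau_i)$, with a unit weight on $W_k$, not the coefficients $a\tau_j\prod_{i=j+1}^{k}(1-a\tau_i)$ of the increments that you quote (both families satisfy the Toeplitz conditions, so the conclusion stands). Also, your aside that $\{u^k\}$ is bounded tacitly assumes $A$ bounded, which the lemma does not state; fortunately it is not needed, since $\dist(u^{k+1},B)\le(1-a\tau_k)\dist(u^k,B)+a\tau_k\|\delta^{k+1}\|$ uses only $h^{k+1}\in B$ and convexity of $B$. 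Your reading of $B$ with the oracle's $G_f$ matches the identification the paper itself makes when it asserts $g^k\in B$.
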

\begin{proof} Using (A2), we define the quantities $\tilde{z}^k = z^k + a \sum_{j=k}^\infty \tau_j e^j$
and establish the recursive relation
\[
\tilde{z}^{k+1} = (1-a\tau_k) \tilde{z}^k + a\tau_k g^k + \tau_k \Delta_k, \quad k=0,1,2,\dots,
\]
where $g^k\in B$ and $\Delta_k = a\delta^k + a \sum_{j=k}^\infty \tau_j e^j\to 0$ a.s.. The convexity of the distance function and (A2) yield the result. \qed
\end{proof}

\begin{theorem}
\label{t:convergence}
If assumptions {\rm (A1)--(A4)} are satisfied,
then, with probability 1, every accumulation point $\hat{x}$ of the sequence $\{x^k\}$ is Clarke stationary,
 and the sequence $\{f(x^k)\}$ is convergent.
\end{theorem}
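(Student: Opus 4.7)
The plan is to follow the differential-inclusion approach to stochastic approximation, relating the discrete algorithm \eqref{QP}--\eqref{def_zk} to a continuous-time dynamical system. First, I would construct interpolated continuous-time processes $X(\cdot)$ and $Z(\cdot)$ by parametrizing the iterates with the cumulative stepsize $t_k = \sum_{j<k}\tau_j$ and interpolating linearly between nodes. By (A1), $X(\cdot)$ is bounded; by Lemma \ref{l:zk-bounded}, $Z(\cdot)$ is asymptotically bounded. The stepsize condition (A2) together with uniform boundedness of increments yields equicontinuity on bounded intervals, and an Arzel\`a--Ascoli plus diagonal-subsequence argument extracts subsequential limits $(\bar{x}(\cdot),\bar{z}(\cdot))$ that are absolutely continuous. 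Using (A3), the noise contribution $\sum \tau_k e^k$ telescopes out by Abel summation while $\delta^k\to 0$, so the limit trajectory satisfies
\begin{align*}
\dt{\bar{x}}(t) &= \bar{y}(\bar{x}(t),\bar{z}(t)) - \bar{x}(t),\\
\dt{\bar{z}}(t) &\in a\bigl(G_f(\bar{x}(t)) - \bar{z}(t)\bigr).
\end{align*}

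Next, I would apply Theorem \ref{t:chain-path} to $f\circ\bar{x}$, obtaining
\[
f(\bar{x}(T)) - f(\bar{x}(0)) = \int_0^T \langle g(\bar{x}(t)), \dt{\bar{x}}(t)\rangle\, dt
\]
for any measurable selection $g(\bar{x}(t))\in G_f(\bar{x}(t))$. Splitting $\langle g(\bar{x}),\dt{\bar{x}}\rangle = \langle \bar{z},\dt{\bar{x}}\rangle + \langle g(\bar{x})-\bar{z},\dt{\bar{x}}\rangle$ and invoking \eqref{opt-eta} bounds the first term by $-\beta\|\dt{\bar{x}}\|^2$, a genuine descent contribution, while the second term measures how far $\bar{z}$ is from being a bona fide generalized subgradient.

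The core technical step, and the main obstacle, is building a Lyapunov function for the coupled $(\bar{x},\bar{z})$ flow that is strictly decreasing outside the stationary set. The natural candidate couples $f(\bar{x})$ with a term that penalizes the mismatch between $\bar{z}$ and $G_f(\bar{x})$: the $f$ part decreases along $\bar{x}$-motions thanks to the gap inequality \eqref{opt-eta}, while the penalty decays because the $\bar{z}$-dynamics is exponential averaging toward $G_f(\bar{x})$ at rate $a>0$. Making this precise relies on the almost-everywhere single-valuedness of generalized subdifferentials along a Lipschitz path, which was already central in the proof of Theorem \ref{t:chain-path}; one must show that the two rates (descent of $f$ versus decay of the gradient mismatch) are compatible at the common time-scale, despite the absence of a time-scale separation between $x^k$ and $z^k$.

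From the Lyapunov decrease I would deduce that any $\omega$-limit point $(x^*,z^*)$ satisfies $\bar{y}(x^*,z^*)=x^*$ and $z^*\in G_f(x^*)$; the first identity forces $-z^*\in N_X(x^*)$, and combined with $G_f(x^*)\supset \partial\! f(x^*)$ this yields the Clarke stationarity condition \eqref{stationary}, so every accumulation point of $\{x^k\}$ lies in $X^*$. Finally, the Lyapunov argument constrains the accumulation values of $f(x^k)$ to the set $f(X^*)$, and assumption (A4) excludes intervals from that set, which prevents $\{f(x^k)\}$ from oscillating between distinct values and forces its convergence.
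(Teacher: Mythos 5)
Your Step 1 (interpolation, Arzel\`a--Ascoli, passage to the limiting system \eqref{dx}--\eqref{dz}) matches the paper's, but the proof breaks down exactly where you say the ``main obstacle'' lies: the Lyapunov function for the coupled flow is announced but never constructed, and that construction is the heart of the argument. The paper's proof uses the explicit function $W(x,z)=af(x)-\eta(x,z)$, with $\eta$ the gap function \eqref{gap}, and the decrease estimate \eqref{W-incr} comes from a specific cancellation: in the chain rule of Theorem \ref{t:chain-path} applied to $f(X(\cdot))$ one chooses the \emph{same} selector $\hat g(X(t))$ that realizes $\dt Z(t)=a\big(\hat g(X(t))-Z(t)\big)$ in \eqref{dz}; then the term $a\int_0^T\langle \bar y(X,Z)-X,\hat g(X)\rangle\,dt$ arising from differentiating $\eta$ along the flow exactly offsets the increment $a\big(f(X(T))-f(X(0))\big)$, and \eqref{opt-eta} supplies the remaining $-a\beta\int_0^T\|\dt X(t)\|^2dt$. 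Your splitting $\langle g,\dt{\bar x}\rangle=\langle\bar z,\dt{\bar x}\rangle+\langle g-\bar z,\dt{\bar x}\rangle$ leaves the second term with no sign control ($f$ alone is \emph{not} a Lyapunov function for this single time-scale system), and your proposed penalty ``mismatch between $\bar z$ and $G_f(\bar x)$'' is problematic as stated: $G_f$ is only upper semicontinuous, so $\dist(z,G_f(x))$ need not be continuous, and you provide no chain rule or decay estimate for it. Without an explicit $W$ and a verified decrease, the remainder of the argument has no foundation.

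The endgame also contains a genuine error and an unproved claim. You assert that an $\omega$-limit point satisfies $z^*\in G_f(x^*)$ and then deduce \eqref{stationary} ``combined with $G_f(x^*)\supset\partial\! f(x^*)$''; that inclusion goes the wrong way: $0\in G_f(x^*)+N_X(x^*)$ does not imply $0\in\partial\! f(x^*)+N_X(x^*)$, since $G_f$ may be strictly larger than the Clarke subdifferential. Moreover $z^*\in G_f(x^*)$ is not established by the averaging argument --- Lemma \ref{l:zk-bounded} only controls the distance of $z^k$ to the convex hull of subdifferentials over the whole accumulation set. The paper avoids both issues: the limiting inclusion \eqref{dz} is stated with $\partial\! f$, and Clarke stationarity is obtained by contradiction --- if $\eta(\bar x,\bar z)=0$ but $\bar x\notin X^*$, one examines the flow started at $(\bar x,\bar z)$, shows via the classical chain rule with $V(z)=\dist(z,\partial\! f(\bar x))$ that $Z(t)$ approaches $\partial\! f(\bar x)$, hence $-Z(T)\notin N_X(\bar x)$ and $\dt X(T)\ne 0$ for some $T$, contradicting the fact that every point of the limit trajectory is an accumulation point of $\{(x^k,z^k)\}$ with zero gap. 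Finally, your appeal to (A4) to conclude convergence of $\{f(x^k)\}$ presupposes convergence of the Lyapunov values along the iterates (obtained in the paper \`a la Duchi--Ruan and Majewski et al.\ together with $\eta(x^k,z^k)\to 0$), which again rests on the missing descent estimate.
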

\begin{proof}

Due to (A1), by virtue of Lemma \ref{l:zk-bounded}, the sequence $\{z^k\}$ is bounded. We divide the proof into three standard steps.

\emph{Step 1: The Limiting Dynamical System.}  We define $p^k=(x^k,z^k)$, accumulated stepsizes
$t_k = \sum_{j=0}^{k-1}\tau_j$, $k=0,1,2 \dots$, and we construct the interpolated trajectory
\[
P_0(t) = p^k + \frac{t-t_{k}}{\tau_k}(p^{k+1}-p^k),\quad t_{k}\le t \le t_{k+1},\quad k=0,1,2,\dots.
\]
For an increasing sequence of positive numbers $\{s_k\}$ diverging to $\infty$, we define shifted trajectories $P_k(t) =P_0(t+s_k)$.
Recall that $P_k(t) = \big( X_k(t),Z_k(t)\big)$.

By \cite[Thm. 3.2]{majewski2018analysis},  for any increasing sequence $\{n_k\}$ of positive integers,
there exist a subsequence $\{ \tilde{n}_k\}$ and  absolutely continuous functions $X_\infty:
[0,+\infty) \to X$ and $Z_\infty: [0,+\infty) \to \Rb^n$ such that for any $T > 0$
\[
\lim_{k\to\infty} \sup_{t\in[0,T]}\left(\big\| X_{\tilde{n}_k}(t)-X_\infty(t)\big\| + \big\| Z_{\tilde{n}_k}(t)-Z_\infty(t)\big\|\right)= 0,
\]
and $(X_\infty(\cdot),Z_\infty(\cdot))$ is a solution of the system of differential equations and inclusions:
\begin{align}
\dt{x}(t) &= \bar{y}\big(x(t),z(t)\big)-x(t), \label{dx}\\
\dt{z}(t) &\in a \big(\partial\! f(x(t)) - z(t)\big). \label{dz}
\end{align}
Moreover, for any $t\ge 0$, the pair $(X_\infty(t),Z_\infty(t))$ is an accumulation point of the sequence $\{(x^k,z^k)\}$.

\emph{Step 2: Descent Along a Path.}  We use the Lyapunov function
\[
W(x,z) = af(x) - \eta(x,z).
\]
For any solution $(X(t),Z(t))$ of the system \eqref{dx}--\eqref{dz}, and for any $T>0$,
we estimate the difference $W(X(T),Z(T)) - W(X(0),Z(0))$. {
We split $W(X(\cdot),Z(\cdot))$ into a generalized differentiable composition $f(X(\cdot))$ and
the ``classical'' part $\eta(X(\cdot),Z(\cdot))$.

Since the path $X(\cdot)$ satisfies \eqref{dx} and $\bar{y}(\cdot,\cdot)$ is continuous, $X(\cdot)$ is continuously differentiable.} Thus,
we can use Theorem \ref{t:chain-path} to conclude that for any $g(X(\cdot))\in \partial\! f(X(\cdot))$
\begin{equation}
\label{f-incr}
\qquad f(X(T)) - f(X(0)) = \int_0^T \big\langle g(X(t)), \dt{X}(t) \big\rangle \;dt 
= \int_0^T \big\langle g(X(t)), \bar{y}(X(t),Z(t)) - X(t) \big\rangle \;dt.\qquad
\end{equation}
On the other hand, since $\bar{y}(x,z)$ is unique, the function $\eta(\cdot,\cdot)$ is continuously differentiable.
Therefore, the chain formula holds for it as well:
\[
\eta(X(T),Z(T)) - \eta(X(0),Z(0)) 
= \int_0^T  \big\langle \nabla_x \eta(X(t),Z(t)), \dt{X}(t) \big\rangle \;dt +
\int_0^T  \big\langle \nabla_z \eta(X(t),Z(t)), \dt{Z}(t) \big\rangle \;dt.
\]
Substituting $\nabla_x \eta(x,z) = -z+\beta(x-\bar y(x,z))$, $\nabla_z \eta(x,z) = \bar y(x,z)-x$ and
$\dt{Z}(t) =  a\big ( \hat{g}(X(t)) - Z(t)\big)$ with some $\hat{g}(X(\cdot))\in \partial\! f(X(\cdot))$,
and using \eqref{opt-eta} we obtain
\begin{align*}
\lefteqn{\eta(X(T),Z(T)) - \eta(X(0),Z(0))}  \\
&= \int_0^T  \big\langle -Z(t)+\beta(X(t)-\bar y(X(t),Z(t)))\, , \,\bar{y}(X(t),Z(t)) - X(t) \big\rangle \;dt \\
&{\quad } +
a \int_0^T  \big\langle \bar y(X(t),Z(t))-X(t)\,,\, \hat{g} (X(t)) - Z(t) \big\rangle \;dt\\
&\ge\; a \int_0^T  \big\langle \bar y(X(t),Z(t))-X(t)\,,\, \hat{g} (X(t)) - Z(t) \big\rangle \;dt\\
&\ge\; a \int_0^T  \big\langle \bar y(X(t),Z(t))-X(t)\,,\, \hat{g} (X(t))\big\rangle \;dt
 + a\beta \int_0^T  \big\| \bar y(X(t),Z(t))-X(t)\big\|^2 \;dt.
\end{align*}
We substitute the subgradient selector $g(X(t))=\hat{g}(X(t))$ into \eqref{f-incr}  and combine it with the last inequality, concluding that
\begin{equation}
\label{W-incr}
W(X(T),Z(T)) - W(X(0),Z(0)) \\
\le  -a\beta \int_0^T  \big\| \bar y(X(t),Z(t))-X(t)\big\|^2 \;dt
= -a\beta \int_0^T  \big\| \dt X(t)\big\|^2 \;dt.
\end{equation}
\emph{Step 3: Analysis of Limit Points.} Define the set
$
\Sc = \big\{ (x,z)\in X^* \times \Rb^n: \eta(x,z)=0\big\}$.
Suppose $(\bar{x},\bar{z})$ is an accumulation point of the sequence $\{(x^k,z^k)\}$. If $\eta(\bar{x},\bar{z}) <0$, then
every solution $(X(t),Z(t))$ of the system \eqref{dx}--\eqref{dz}, starting from $(X(0),Z(0))= (\bar{x},\bar{z})$ has $\dt X(0)\ne 0$.
Using \eqref{W-incr} and arguing as in \cite[Thm. 3.20]{duchi2018stochastic} or \cite[Thm. 3.5]{majewski2018analysis},
we obtain a contradiction. Therefore,
we must have $\eta(\bar{x},\bar{z})=0$. Suppose $\bar{x}\not \in X^*$. Then
\begin{equation}
\label{non-opt}
\dist\big(0,  \partial\! f(\bar{x}) + N_X(\bar{x})\big) >0.
\end{equation}
Suppose $X(t)=\bar{x}$ for all $t\ge 0$. The inclusion \eqref{dz} simplifies:
$\dt{z}(t) \in a \big(\partial\! f(\bar{x}) - z(t)\big)$.
By using the convex Lyapunov function $V(z) = \dist\big(z,\partial\! f(\bar{x})\big)$ and applying the classical chain formula
on the path $Z(\cdot)$ \cite{brezis1971monotonicity}, we deduce that
\begin{equation}
\label{Z-conv}
\lim_{t\to\infty} \dist\big(Z(t),  \partial\! f(\bar{x}) \big) = 0.
\end{equation}
It follows from \eqref{non-opt}--\eqref{Z-conv} that $T>0$ exists,
such that $ -Z(T) \not \in N_X(\bar{x})$, which yields $\dt X(T)\ne 0$. Consequently,
the path $X(t)$ starting from $\bar{x}$ cannot be constant.  But then again $T>0$ exists, such that $\dt X(T)\ne 0$. By Step 1,
the pair $(X(T),Z(T))$ would have to be an accumulation point of of the sequence $\{(x^k,z^k)\}$, a case already excluded.
We conclude that every accumulation point $(\bar{x},\bar{z})$  of the sequence $\{(x^k,z^k)\}$ is in $\Sc$.
The convergence of the sequence $\big\{W(x^k,z^k)\big\}$ then follows in the same way as
\cite[Thm. 3.20]{duchi2018stochastic} or \cite[Thm. 3.5]{majewski2018analysis}. As $\eta(x^k,z^k)\to 0$,
the convergence of $\{f(x^k)\}$ follows as well.  \qed
\end{proof}
Directly from Lemma \ref{l:zk-bounded} we obtain convergence of averaged stochastic subgradients.
\begin{corollary}
If the sequence $\{x^k\}$ is convergent to a single point $\bar{x}$, then every accumulation point
of $\{z^k\}$ is an element of $\partial\! f(\bar{x})$.
\end{corollary}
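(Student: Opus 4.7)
The plan is to reduce the statement to a direct application of Lemma \ref{l:zk-bounded} on shrinking neighborhoods of $\bar{x}$, leveraging the upper semicontinuity of the Clarke subdifferential of a locally Lipschitz function. The overall strategy: show $\mathrm{dist}(z^k,\partial\! f(\bar{x}))\to 0$, which immediately yields the claim since $\partial\! f(\bar{x})$ is closed.

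First I would fix an arbitrary $\varepsilon>0$. Because $f$ is locally Lipschitz, the Clarke subdifferential $\partial\! f$ is nonempty, convex, compact-valued, and upper semicontinuous at $\bar{x}$; hence there exists $\delta>0$ such that $\partial\! f(x)\subset \partial\! f(\bar{x})+\varepsilon\Bb$ for all $x$ with $\|x-\bar{x}\|\le\delta$, where $\Bb$ is the closed unit ball. Since $x^k\to\bar{x}$, there exists an index $K$ such that the tail $\{x^k\}_{k\ge K}$ lies inside the closed ball $A_\delta=\{x:\|x-\bar{x}\|\le\delta\}$.

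Next I would apply Lemma \ref{l:zk-bounded} to the shifted sequence $\{x^{K+j},z^{K+j}\}_{j\ge 0}$. The recursion \eqref{def_zk} has the same form with the same conditions (A2), (A3) after re-indexing (both hold on any tail, since $\sum_k \tau_k=\infty$ and the convergence of $\sum_k\tau_ke^k$ are preserved by shifting). With $A=A_\delta$, the lemma gives $\mathrm{dist}(z^k,B)\to 0$, where
\[
B=\mathrm{conv}\Big(\bigcup_{x\in A_\delta}\partial\! f(x)\Big)\subset \mathrm{conv}\big(\partial\! f(\bar{x})+\varepsilon\Bb\big)=\partial\! f(\bar{x})+\varepsilon\Bb,
\]
using convexity of $\partial\! f(\bar{x})$. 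Consequently $\limsup_{k\to\infty}\mathrm{dist}(z^k,\partial\! f(\bar{x}))\le\varepsilon$, and since $\varepsilon>0$ was arbitrary, $\mathrm{dist}(z^k,\partial\! f(\bar{x}))\to 0$. As $\partial\! f(\bar{x})$ is closed, every accumulation point of $\{z^k\}$ belongs to it.

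The only subtle point is the justification that Lemma \ref{l:zk-bounded} may be applied to a tail rather than the whole sequence; this follows by inspecting its proof, where the asymptotic distance of $\tilde{z}^k$ to $B$ is unaffected by the initial value because $(1-a\tau_k)$-weighting together with $\sum_k\tau_k=\infty$ damps out the starting term. No new analytic work beyond Lemma \ref{l:zk-bounded} and the upper semicontinuity of $\partial\! f$ is required, which is why the corollary is indeed ``direct''.
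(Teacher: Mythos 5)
Your proof is correct and matches the paper's intent: the corollary is stated as following ``directly from Lemma \ref{l:zk-bounded},'' and your argument---applying that lemma to tails of the sequence contained in shrinking balls around $\bar{x}$ and invoking upper semicontinuity and convexity of $\partial\! f(\bar{x})$---is precisely the omitted direct argument, including the correct observation that (A2)--(A3) and the damping of the initial condition make the lemma applicable to any tail.
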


{
\section{Example}
\label{s:example}

A \emph{Rectified Linear Unit (ReLU)} neural network \cite{nair2010rectified} predicts a random outcome $Y\in \Rb^m$ from random features $X\in \Rb^n$ by a nonconvex nonsmooth function $y(X,W)$, defined recursively as follows:
\[
s_1=X,\quad s_{\ell+1} = (W_\ell s_\ell)_+,\  \ell = 1,2,\dots,L-1,\quad y(X,W)=W_L s_L,
\]
where $(v)_+=\max(0,v)$, componentwise.
The decision variables are  $W_1,\dots,W_{L-1}\in \Rb^{n\times n}$ and $W_L\in \Rb^{m\times n}$. The simplest training problem is:
\begin{equation}
\label{ReLu}
\min_{W\in \mathcal{W}}\;f (W) \eqdef \frac{1}{2} \Eb \big[ \|y(X,W)-Y\|^2\big],
\end{equation}
where $\mathcal{W}$ is a box about 0.
The function $f(W)$ is not subdifferentially regular. It is not Whitney stratifiable, in general, because this property is not preserved
under the expected value operator. However, we can use Theorems \ref{t:composition} and \ref{t:expected-generalized} to verify that it is differentiable in a generalized sense, and to calculate its stochastic subgradients. For a random data point $(X^k,Y^k)$ we subdifferentiate the function under the expected value in \eqref{ReLu} by recursive application of Theorem \ref{t:composition}.
In particular, for $L=2$ and $m=1$ we have $y(X,W) =  W_2 (W_1X)_+ $, and
$g^k = \big( y(X^k,W^k)-Y^k \big)
\begin{bmatrix}
\; D^k (W_2^k)^T  (X^k)^T
\,& \,(W_1^kX^k)^T_+\;
\end{bmatrix}.
$
He\-re, $D^k$ is a diagonal matrix with 1 on position $(i,i)$, if $(W_1^k X^k)_i>0$, and 0 otherwise. A typical run of the stochastic subgradient method and the method with direction averaging is shown in Fig. \ref{f:1},
on an example of predicting wine quality \cite{cortez2009modeling}, with identical random starting points, sequences of observations, and schedules of stepsizes: $\tau_k = 0.03/(1+5 k/N)$, where $N = 500,000$.
The coefficient $a=0.1$. For comparison, the loss of a simple regression model is 666.}
\begin{figure}[h!]
\vspace{-17em}
\centering
\includegraphics[width=\linewidth]{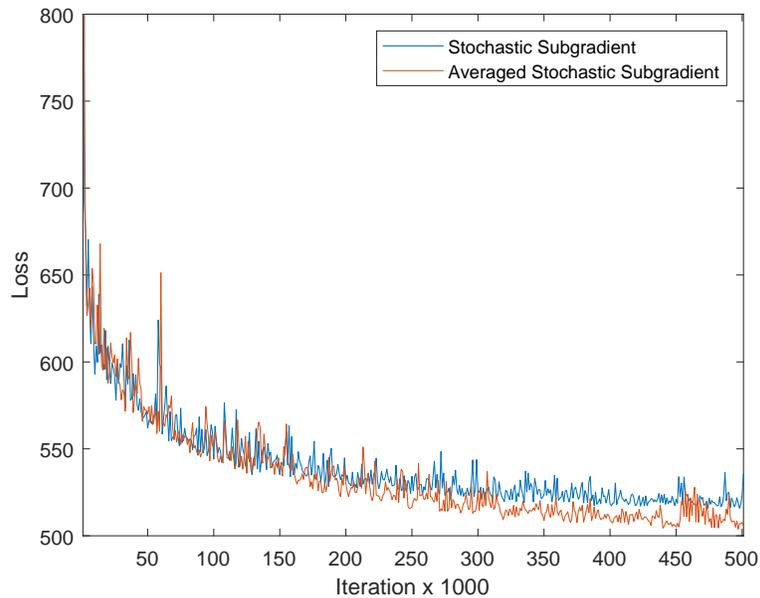}
\vspace{-17em}
\caption{Comparison of  methods with (lower graph) and without averaging (upper graph).}\label{f:1}
\end{figure}

\bibliographystyle{plain}

\newpage

\begin{appendices}
\section{Generalized differentiability of functions}

Compositions of generalized diifferentiable functions are crucial in our analysis.
\begin{theorem}{\rm \cite[Thm. 1.6]{mikhalevich1987nonconvex}}
\label{t:composition}
If $h:\Rb^m \to \Rb$ and $f_i:\Rb^n\to \Rb$, $i=1,\dots,m$, are differentiable in a generalized sense, then the composition
$\psi(x) = h\big( f_1(x),\dots,f_m(x)\big)$
is differentiable in a generalized sense, and at any point $x\in \Rb^n$ we can define the generalized subdifferential of $\psi$ as follows:
\begin{multline}
\label{sub-comp}
G_\psi(x) = \text{\rm conv} \big\{ g\in \Rb^n: g = \begin{bmatrix} g_1 & \cdots & g_m\end{bmatrix} g_0,\\ \text{ with } g_0\in G_{h}\big(f_1(x),\dots,f_m(x)\big) \text{ and } g_j\in G_{f_j}(x),\ j=1,\dots,m\big\}.
\end{multline}
\end{theorem}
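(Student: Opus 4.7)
The plan is to reduce the chain rule to a two-step Taylor-type expansion: first expand $h$ about $F(x) \eqdef (f_1(x),\dts,f_m(x))$, then expand each component $f_j$ about $x$, and read off the composite generalized subgradient as the affine term. Fix $x\in\Rb^n$; choose open sets $U\subset \Rb^n$ around $x$ and $V\subset \Rb^m$ around $F(x)$ on which the generalized subdifferentials $G_{f_j}$ and $G_h$ are defined and take the shrinking of $U$ small enough that $F(U)\subset V$, using that each $f_j$, being differentiable in a generalized sense, is locally Lipschitz with some constant $L_j$ (so $F$ is locally Lipschitz with constant $L=(\sum_j L_j^2)^{1/2}$).

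For $y\in U$, pick any $g_0\in G_h(F(y))$ and any $g_j\in G_{f_j}(y)$, $j=1,\dts,m$. Writing $F$ componentwise and applying the defining expansion of $h$ at $F(x)$ with reference point $F(y)\in V$, and then the defining expansions of the $f_j$ at $x$ with reference point $y$, I get
\begin{align*}
\psi(y)-\psi(x) &= \big\langle g_0, F(y)-F(x)\big\rangle + o_h\big(F(x),F(y),g_0\big)\\
&= \sum_{j=1}^m (g_0)_j\,\big[\langle g_j,y-x\rangle + o_{f_j}(x,y,g_j)\big] + o_h\big(F(x),F(y),g_0\big)\\
&= \langle g,y-x\rangle + R(x,y,g_0,g_1,\dts,g_m),
\end{align*}
with $g = \begin{bmatrix} g_1 & \cdots & g_m\end{bmatrix} g_0$ and remainder $R = \sum_j (g_0)_j\,o_{f_j} + o_h$. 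Compactness of $G_h(F(y))$ bounds $|(g_0)_j|$, so $\sum_j (g_0)_j o_{f_j}/\|y-x\| \to 0$ uniformly in the selections; and the Lipschitz bound $\|F(y)-F(x)\|\le L\|y-x\|$ converts the $h$-remainder estimate $o_h/\|F(y)-F(x)\|\to 0$ into $o_h/\|y-x\|\to 0$, again uniformly in $g_0$ (with the convention $o_h=0$ when $F(y)=F(x)$). This shows that the set on the right-hand side of \eqref{sub-comp}, \emph{before} taking the convex hull, already satisfies the defining expansion, and taking $\text{conv}$ preserves it by linearity of $\langle\cdot,y-x\rangle$ and of the remainder bound.

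It remains to verify that the candidate multifunction $G_\psi$ defined by \eqref{sub-comp} is nonempty, convex, compact valued, and upper semicontinuous on $U$. Nonemptiness and convexity are immediate from the $\text{conv}$; compactness follows because $G_\psi(y)$ is the convex hull of the continuous image (under $(g_0,g_1,\dts,g_m)\mapsto [g_1\cdots g_m]g_0$) of the compact product set $G_h(F(y))\times \prod_j G_{f_j}(y)$, and in $\Rb^n$ the convex hull of a compact set is compact. For upper semicontinuity, take $y_k\to x$ and $g^k\in G_\psi(y_k)$ with $g^k\to \bar g$; by Carathéodory write each $g^k$ as a convex combination of at most $n+1$ generators $[g_1^{k,\ell}\cdots g_m^{k,\ell}]g_0^{k,\ell}$, and pass to a subsequence on which the coefficients and every generator converge. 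The limit generators lie in $G_h(F(x))$ and $G_{f_j}(x)$ by upper semicontinuity of $G_h$ (applied at $F(y_k)\to F(x)$, which uses continuity of $F$) and of the $G_{f_j}$, so $\bar g\in G_\psi(x)$.

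The routine algebraic bookkeeping is straightforward once one commits to the order ``expand outer, then expand inner.'' The main obstacle I expect is the uniformity in the subgradient selections: the definition requires $\sup_g R(x,y,g)/\|y-x\|\to 0$ where the supremum ranges over \emph{all} selections, including convex combinations, so one has to be careful that the bounds on $o_h$ and $o_{f_j}$, though uniform in $g_0$ and $g_j$ individually, yield a uniform bound after the convex combination. This is handled by the observation that $R$ is an affine function of the generators for fixed $y$, so a uniform bound over generators gives the same uniform bound over convex combinations; together with the local Lipschitz conversion $\|F(y)-F(x)\|\le L\|y-x\|$, this closes the argument.
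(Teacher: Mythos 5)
The paper does not actually prove this statement: Theorem~\ref{t:composition} is recalled in the appendix verbatim from \cite[Thm.~1.6]{mikhalevich1987nonconvex} with no argument given, so there is nothing to compare your proof against line by line. On its own merits your argument is correct, and it is the standard proof of this chain rule. You handle the one genuinely delicate point of Norkin's definition correctly: the expansion anchors the subgradient at the \emph{moving} point, so you must take $g_0\in G_h(F(y))$ and $g_j\in G_{f_j}(y)$ (not at $x$), which is precisely what makes the resulting $g=[\,g_1\cdots g_m\,]g_0$ land in $G_\psi(y)$ as required. The remaining ingredients --- local boundedness of $G_h$ near $F(x)$ to control $\sum_j (g_0)_j\,o_{f_j}$, the Lipschitz conversion $\|F(y)-F(x)\|\le L\|y-x\|$ for the outer remainder, the affine-in-the-generators observation to pass the uniform bound through the convex hull, and Carath\'eodory plus closed graphs for compactness and upper semicontinuity --- are all present and correctly deployed. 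The only cosmetic omissions are that the upper semicontinuity check should be stated at an arbitrary $y\in U$ rather than only at $x$, and that the global statement follows by running the local argument at every base point with the single multifunction $G_\psi$; neither is a gap.
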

Even if we take $G_{h}(\cdot)=\partial h(\cdot)$ and $G_{f_j}(\cdot)=\partial\! f_j(\cdot)$, $j=1,\dots,m$, we may obtain
$G_\psi(\cdot) \ne \partial \psi(\cdot)$,
but $G_\psi$ defined above satisfies Definition \ref{d:Norkin}. 

\begin{theorem}{\rm \cite[Thm. 1.12]{mikhalevich1987nonconvex}}
\label{t:differetiable-ae}
If $f:\Rb^n\to \Rb$ is differentiable in a generalized sense, then for almost all $x\in \Rb^n$ we have $G_f(x)=\{\nabla f(x)\}$.
\end{theorem}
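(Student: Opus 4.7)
The plan is to combine Rademacher's theorem with the defining approximation property of generalized differentiability. First I would invoke Theorem~1.1 of \cite{mikhalevich1987nonconvex} (already used earlier in the paper) to conclude that $f$ is locally Lipschitz. Rademacher's theorem then yields a set $D\subset\Rb^n$ of full Lebesgue measure on which $f$ is Fr\'echet differentiable; I restrict attention to $x_0\in D$ and aim to prove $G_f(x_0)=\{\nabla f(x_0)\}$ for almost every such $x_0$.

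The key step is to compare two Taylor-type expansions at $x_0$. Fr\'echet differentiability gives $f(y)-f(x_0)=\langle\nabla f(x_0),y-x_0\rangle+o(\|y-x_0\|)$, while the defining property of generalized differentiability at $x_0$ gives, for all $y$ in some neighborhood and all $g\in G_f(y)$,
\[
f(y)-f(x_0)=\langle g,y-x_0\rangle+o(x_0,y,g),\qquad \sup_{g\in G_f(y)}\frac{|o(x_0,y,g)|}{\|y-x_0\|}\longrightarrow 0.
\]
Subtracting yields the pivotal estimate
\[
\langle g-\nabla f(x_0),\,y-x_0\rangle=o(\|y-x_0\|)\quad\text{uniformly in } g\in G_f(y)\ \text{as}\ y\to x_0.
\]
Setting $y=x_0+td$ for a fixed direction $d$ and letting $t\downarrow 0$, any $g\in G_f(x_0+td)$ must satisfy $\langle g,d\rangle\to\langle\nabla f(x_0),d\rangle$, and the upper semicontinuity of $G_f$ transfers this directional constraint to the accumulation points of such selections in $G_f(x_0)$.

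To promote this directional control to the full singleton statement, I would let $B=\{x_0\in D:G_f(x_0)\supsetneq\{\nabla f(x_0)\}\}$ and show that $B$ has Lebesgue measure zero by arguing that $B$ admits no point of positive Lebesgue density. At a putative density point $x_0\in B$, for every direction $d$ one can select a sequence $y_k\to x_0$ with $y_k\in B$ and $(y_k-x_0)/\|y_k-x_0\|\to d$, together with selections $g_k\in G_f(y_k)$ witnessing a non-trivial spread in $G_f(y_k)$; then the uniform pivotal estimate along these sequences contradicts the required uniform remainder decay. Since $\Rb^n\setminus D$ is already negligible by Rademacher, this would give $G_f(x)=\{\nabla f(x)\}$ almost everywhere.

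The main obstacle is precisely this last step. The pivotal estimate controls $g$ only along the single direction $(y-x_0)/\|y-x_0\|$, so a careful direction-sweep is required while preserving the uniformity of the remainder over selections. Moreover, upper semicontinuity of $G_f$ forces only that nearby values \emph{lie inside} neighborhoods of $G_f(x_0)$, not the converse; hence deducing singleton-ness of $G_f(x_0)$ from information about $G_f(y)$ for $y$ nearby genuinely requires the measure-theoretic density argument (or an equivalent measurable-selection argument) rather than following cleanly from the pointwise analysis alone.
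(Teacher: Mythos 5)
First, note that the paper does not prove this statement at all: it is quoted verbatim from \cite[Thm.~1.12]{mikhalevich1987nonconvex}, so your attempt can only be measured against the definition and the companion results the paper states (Theorems \ref{t:composition} and \ref{t:one-var}), not against an in-paper argument.

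Your proposal has a genuine gap at its central step, and you have in fact located it yourself. The pivotal estimate obtained by subtracting the two expansions is centered at $x_0$ and therefore constrains only elements of $G_f(y)$ for $y\ne x_0$, and only their component along the single radial direction $(y-x_0)/\|y-x_0\|$; it is vacuous at $y=x_0$, and upper semicontinuity only bounds $G_f(y)$ by neighborhoods of $G_f(x_0)$, never the reverse. Consequently the density-point argument as sketched produces no contradiction: membership of $y_k$ in $B$ gives no quantitative lower bound on the spread of $G_f(y_k)$ (non-singleton sets may have arbitrarily small diameter), and even a large spread at $y_k$ is perfectly compatible with the pivotal estimate if it is orthogonal to $y_k-x_0$. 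To constrain $G_f(x_0)$ itself one must use expansions centered at points $x\ne x_0$ and evaluated at $y=x_0$, and there the real difficulty appears: the remainder condition in Definition \ref{d:Norkin} is a pointwise limit in the center $x$, with no uniformity over $x$. A repaired version of your scheme exists, but it needs ingredients absent from the sketch: stratify by $\mathrm{diam}\,G_f(x)\ge 1/m$, uniformize the remainder moduli over centers on a subset of nearly full measure (an Egorov-type step, with attendant measurability checks), and then bound the diameter of $G_f(y)$ at a density point $y$ of that subset by viewing $y$ from centers $x$ in the subset whose directions to $y$ quantitatively span $\Rb^n$. None of this is routine, so as written the proof does not go through.

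There is also a cleaner route that stays entirely within the toolkit the paper itself displays, and avoids both Rademacher and density points. Restrict $f$ to lines parallel to the coordinate axes: by Theorem \ref{t:composition} the function $t\mapsto f(x+te_i)$ is differentiable in a generalized sense with generalized subdifferential $\{\langle g,e_i\rangle:\ g\in G_f(x+te_i)\}$, so by Theorem \ref{t:one-var} this projection is a singleton for all but countably many $t$ on each line; Fubini's theorem then gives that for almost every $x$ all $n$ coordinate projections of $G_f(x)$ are singletons, hence $G_f(x)$ is a singleton $\{g^*\}$. Finally, at such a point the defining expansion combined with upper semicontinuity of $G_f$ (which forces $G_f(y)\subset g^*+\varepsilon B$ for $y$ near $x$) yields $f(y)=f(x)+\langle g^*,y-x\rangle+o(\|y-x\|)$, i.e.\ $f$ is differentiable at $x$ with $\nabla f(x)=g^*$. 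This is the argument I would recommend you write out.
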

Functions of one variable have the following remarkable property.
\begin{theorem}{\rm \cite[Cor. 1.5]{mikhalevich1987nonconvex}}
\label{t:one-var}
If $f:\Rb \to \Rb$ is differentiable in a generalized sense, then the set of points $x$ at which a generalized subdifferential $G_f(x)$ is not a singleton is at most countable.
\end{theorem}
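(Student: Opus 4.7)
The plan is to use the one-dimensional structure: since each $G_f(x)$ is convex and compact in $\Rb$, it is a closed interval $[a(x), b(x)]$ with $a(x) \le b(x)$, and the set in question is exactly $\{x \in \Rb : a(x) < b(x)\}$. My strategy is to show that the gap $\Delta(x) := b(x) - a(x)$ collapses pointwise at \emph{every} $x$ in the following strong sense: $\Delta(y) \to 0$ as $y \to x$ with $y \ne x$, regardless of the value of $\Delta(x)$. Once this is established, the countability claim follows by a routine stratification argument.

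The key step is the collapse property. Fix $x$. For $y$ near $x$ with $y \ne x$, Definition \ref{d:Norkin} says that for every $g \in G_f(y)$,
\[
f(y) - f(x) = g \cdot (y - x) + o(x, y, g),
\]
and $\sup_{g \in G_f(y)} |o(x, y, g)|/|y - x| \to 0$ as $y \to x$. Writing this equation for the two extremal selections $g = a(y)$ and $g = b(y)$ and subtracting yields
\[
\bigl(b(y) - a(y)\bigr)(y - x) = o(x, y, a(y)) - o(x, y, b(y)),
\]
whence
\[
0 \le b(y) - a(y) \le \frac{2 \sup_{g \in G_f(y)} |o(x, y, g)|}{|y - x|} \;\longrightarrow\; 0.
\]

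The remainder of the proof is bookkeeping. For each $\varepsilon > 0$ set $E_\varepsilon = \{x \in \Rb : \Delta(x) > \varepsilon\}$. Any accumulation point $x_0$ of $E_\varepsilon$ would supply a sequence $y_k \to x_0$ with $y_k \ne x_0$ and $\Delta(y_k) > \varepsilon$, contradicting the collapse property at $x_0$. Hence $E_\varepsilon$ is a discrete subset of $\Rb$, therefore at most countable. The exceptional set then decomposes as $\bigcup_{n=1}^\infty E_{1/n}$, a countable union of at most countable sets, and the proof is complete.

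The main obstacle is the collapse property, and what makes it go through is the feature of Definition \ref{d:Norkin} that the remainder vanishes \emph{uniformly} over the selection $g \in G_f(y)$. Without this uniformity one could at best conclude that limits of particular selections $g(y) \in G_f(y)$ as $y \to x$ lie in $G_f(x)$, which does not force $b(y) - a(y) \to 0$. The uniform version lets us evaluate the expansion at the two endpoint selections simultaneously and extract a genuine bound on the diameter of $G_f(y)$, which is exactly what the argument requires.
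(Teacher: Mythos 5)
The paper does not prove this statement; it is imported verbatim from \cite[Cor.\ 1.5]{mikhalevich1987nonconvex}, so there is no in-paper argument to compare against. Your proof is correct and self-contained. The collapse property is the right key observation: writing $G_f(y)=[a(y),b(y)]$ and evaluating the expansion of Definition \ref{d:Norkin} at the two endpoint selections gives $\bigl(b(y)-a(y)\bigr)(y-x)=o(x,y,a(y))-o(x,y,b(y))$, and the uniformity of the remainder over $g\in G_f(y)$ then forces $b(y)-a(y)\to 0$ as $y\to x$ for every fixed $x$ --- note that, as you point out, a merely selection-wise $o(\cdot)$ estimate would not suffice, so you are correctly reading the limit condition in the definition as uniform smallness of $|o(x,y,g)|/|y-x|$ over $g\in G_f(y)$, which is the intended meaning. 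The bookkeeping is also sound: each $E_\varepsilon=\{x:\Delta(x)>\varepsilon\}$ has no accumulation points in $\Rb$ (an accumulation point would violate the collapse property there), hence is countable, and the exceptional set is $\bigcup_n E_{1/n}$. One cosmetic remark: it is worth saying explicitly that the collapse argument is local, i.e.\ it is applied for $y$ in the neighborhood $U$ attached to the point $x$ by Definition \ref{d:Norkin}; since generalized differentiability holds at every point with the same mapping $G_f$, this costs nothing.
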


For stochastic optimization, essential is the closure of the class functions differentiable in a generalized sense with respect to
expectation.
\begin{theorem}{\rm \cite[Thm. 23.1]{mikhalevich1987nonconvex}}
\label{t:expected-generalized}
Suppose $(\varOmega,\Fc,P)$ is a probability space and a function $f:\Rb^n\times \varOmega \to \Rb$ is differentiable in a generalized sense with respect to $x$ for all $\omega\in \varOmega$, and integrable with respect to $\omega$ for all $x\in \Rb^n$. Let $G_f: \Rb^n \times \varOmega
\rightrightarrows \Rb^n$ be a multifunction, which is measurable with respect to $\omega$ for all $x\in \Rb^n$, and which is a generalized subdifferential mapping of $f(\cdot,\omega)$ for all $\omega\in \varOmega$. If for every compact set $K\subset \Rb^n$ an integrable function
$L_K:\varOmega\to \Rb$ exists, such that
$\sup_{x\in K}\sup_{g\in G_f(x,\omega)}\|g\| \le L_K(\omega)$, $\omega \in \varOmega$,
then the function
\[
F(x) = \int_\varOmega f(x,\omega)\;P(d\omega),\quad x\in \Rb^n,
\]
is differentiable in a generalized sense, and the multifunction
\[
G_F(x) = \int_\varOmega G_f(x,\omega)\;P(d\omega),\quad x\in \Rb^n,
\]
is its generalized subdifferential mapping.
\end{theorem}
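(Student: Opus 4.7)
The plan is to verify each clause of Definition~\ref{d:Norkin} for $F$, using the Aumann integral interpretation of $G_F(x)=\int_\varOmega G_f(x,\omega)\,P(d\omega)$, i.e.\ the set of all $\int h\,dP$ over integrable selections $h(\omega)\in G_f(x,\omega)$. First I would confirm the set-valued properties. Nonemptiness follows from the Kuratowski--Ryll-Nardzewski selection theorem applied to the measurable, compact-valued map $G_f(x,\cdot)$; any such measurable selection is integrable thanks to the dominating bound $\|h(\omega)\|\le L_K(\omega)$ on a compact neighborhood $K\ni x$. Convexity is inherited pointwise from $G_f(x,\omega)$, and compactness follows from the standard compactness theorem for Aumann integrals of compact-valued, integrably bounded measurable multifunctions. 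Upper semicontinuity of $G_F$ would be obtained by a weak-$L^1$ argument: given $x_n\to x_0$ and $g_n\to g$ with $g_n\in G_F(x_n)$, integrable representatives $h_n(\omega)\in G_f(x_n,\omega)$ dominated by $L_K(\omega)$ admit, by Dunford--Pettis, a weakly convergent subsequence; Mazur's lemma combined with the pointwise upper semicontinuity and convexity of $G_f(\cdot,\omega)$ then produces a representative $h$ of $g$ in $G_f(x_0,\cdot)$.

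For the core expansion, I would fix $x_0\in\Rb^n$, restrict $y$ to a compact neighborhood $K$, and take any $g\in G_F(y)$ represented as $g=\int h\,dP$ with $h(\omega)\in G_f(y,\omega)$. From Definition~\ref{d:Norkin} applied to $f(\cdot,\omega)$,
\[
f(y,\omega) = f(x_0,\omega) + \langle h(\omega),\,y-x_0\rangle + o_\omega\!\big(x_0,y,h(\omega)\big),
\]
and integrating in $\omega$ yields
\[
F(y)-F(x_0) = \langle g,\,y-x_0\rangle + R(x_0,y,g),\qquad R(x_0,y,g)=\int_\varOmega o_\omega\!\big(x_0,y,h(\omega)\big)\,P(d\omega).
\]
All three terms are integrable: $f(\cdot,\omega)$ is locally Lipschitz with constant $L_K(\omega)$ (via Lebourg's mean value theorem, since the Clarke subdifferential is contained in $G_f(\cdot,\omega)$), so $|f(y,\omega)-f(x_0,\omega)|\le L_K(\omega)\|y-x_0\|$, and likewise $|\langle h(\omega),y-x_0\rangle|\le L_K(\omega)\|y-x_0\|$, which also delivers local Lipschitz continuity of $F$.

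The main obstacle is the uniform little-$o$ control of $R$ over all $g\in G_F(y)$, because different $g$'s correspond to different measurable selectors. My route is the one-sided estimate
\[
\sup_{g\in G_F(y)}\frac{|R(x_0,y,g)|}{\|y-x_0\|}
\;\le\;
\int_\varOmega \sup_{h\in G_f(y,\omega)}\frac{|o_\omega(x_0,y,h)|}{\|y-x_0\|}\,P(d\omega),
\]
valid because every $g\in G_F(y)$ arises as the integral of some measurable selector. The integrand converges to $0$ pointwise in $\omega$ by the uniform-in-selection clause of Definition~\ref{d:Norkin} applied to $f(\cdot,\omega)$, and it is dominated by $2L_K(\omega)$ via $|o_\omega(x_0,y,h)|\le|f(y,\omega)-f(x_0,\omega)|+\|h\|\cdot\|y-x_0\|\le 2L_K(\omega)\|y-x_0\|$. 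Dominated convergence then gives $\lim_{y\to x_0}\sup_{g\in G_F(y)}|R(x_0,y,g)|/\|y-x_0\|=0$, which is exactly the remainder condition in Definition~\ref{d:Norkin}, completing the proof. The delicate technical point lurking in the displayed inequality is the measurability of the inner supremum, which I would address by noting that $o_\omega(x_0,y,\cdot)$ is continuous on the compact set $G_f(y,\omega)$ and this multifunction is measurable in $\omega$, so the supremum is a Carathéodory-type measurable function.
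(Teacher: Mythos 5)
The paper does not prove this statement at all: it is quoted verbatim in the appendix as Theorem~\ref{t:expected-generalized} with a citation to \cite[Thm.~23.1]{mikhalevich1987nonconvex}, so there is no in-paper argument to compare yours against. Judged on its own, your proof is essentially correct and follows the natural (and, as far as I can tell, the classical) route. The structural part is sound: nonemptiness, convexity and compactness of the Aumann integral are standard under the integrable bound $L_K$, and your closed-graph argument via Dunford--Pettis and Mazur, combined with local boundedness of $G_F$, does yield upper semicontinuity. The heart of the matter is exactly where you put it: the remainder estimate must be uniform over $g\in G_F(y)$, and your inequality bounding $\sup_{g}|R(x_0,y,g)|/\|y-x_0\|$ by the integral of the $\omega$-wise suprema is legitimate precisely because every element of the Aumann integral comes from a measurable selector; pointwise convergence of the integrand to zero is the definition of generalized differentiability of $f(\cdot,\omega)$ at $x_0$, and the $2L_K(\omega)$ domination closes the argument by dominated convergence (applied along arbitrary sequences $y_n\to x_0$). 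Two small remarks. First, you do not need Lebourg's theorem or the inclusion $\partial f(\cdot,\omega)\subseteq G_f(\cdot,\omega)$ to get $|f(y,\omega)-f(x_0,\omega)|\le L_K(\omega)\|y-x_0\|$: the paper already invokes \cite[Thm.~1.1]{mikhalevich1987nonconvex}, which gives local Lipschitz continuity of a generalized differentiable function with constant equal to the local bound on its generalized subgradients, which is exactly $L_K(\omega)$. Second, your measurability remark for the inner supremum is the right one; it is the only place where the assumed measurability of $G_f(x,\cdot)$ is genuinely needed beyond defining the Aumann integral. With those details in place the proposal is a complete proof.
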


\end{appendices}

\end{document}